\theoremstyle{plain}      
    \newtheorem{theorem}{Theorem}[section]
    \newtheorem{proposition}[theorem]{Proposition}
    \newtheorem{lemma}[theorem]{Lemma}
\theoremstyle{definition}
    \newtheorem{definition}[theorem]{Definition}
    \newtheorem{example}[theorem]{Example}
\theoremstyle{remark}
\newcommand{\C}{\ensuremath{\mathcal{C}}}
\DeclareMathOperator{\Chu}{Chu}
\newcommand{\op}{\ensuremath{\mathrm{op}}}
\newcommand{\ox}{\ensuremath{\otimes}}
\newcommand{\dia}{\ensuremath{\diamond}}
\newcommand{\wt}{\ensuremath{\widetilde}}
\newcommand{\ra}{\ensuremath{\rightarrow}}
\newcommand{\xra}{\ensuremath{\xrightarrow}}
\begin{document}
\title{Note on star-autonomous comonads}
\author{Craig Pastro}
\maketitle
\begin{abstract}
We develop an alternative approach to star-autonomous comonads via linearly
distributive categories. It is shown that in the autonomous case the notions of
star-autonomous comonad and Hopf comonad coincide.
\end{abstract}
%==============================================================================%

%==============================================================================%
\section{Introduction}
%==============================================================================%

%Some questions:
% 1. What about an LDC with only one negation?
% 2. Shift monoid. What about modules for such a thing?
% 3. Join semilattices.

Given a linearly distributive category $\C$, this note determines what
structure is required of a comonad $G$ on $\C$ so that $\C^G$, the category of
Eilenberg-Moore coalgebras of $G$, is again a linearly distributive category.
Furthermore, if $\C$ is equipped with negations (and is hence a star-autonomous
category), the structure required to lift the negations to $\C^G$ is determined
as well. This latter is equivalent to lifting star-autonomy and it is shown
that the notion presented is equivalent to a star-autonomous comonad~\cite{PS}.
As a consequence of the presentation given here, it may be easily seen that any
star-autonomous comonad on an autonomous category is a Hopf monad~\cite{BV}.

%==============================================================================%
\section{Lifting linear distributivity}
%==============================================================================%

Suppose $\C$ is a monoidal category and $G:\C \ra \C$ is a comonad on $\C$.
Recall that $\C^G$, the category of (Eilenberg-Moore) coalgebras of
$G$, is monoidal if and only if $G$ is a monoidal comonad~\cite{M}. In this
section we are interested in the structure required to lift linear
distributivity to the category of coalgebras.

A linearly distributive category $\C$ is a category equipped with two monoidal
structures $(\C,\star,I)$ and $(\C,\dia,J)$,\footnote{
    For simplicity we assume that the monoidal structures are
    strict, although this is not necessary. Furthermore, in their original
    paper~\cite{CS} the tensor products $\star$ and $\dia$ are respectively
    denoted by $\otimes$ and $\dia$, and called \emph{tensor} and \emph{par},
    emphasizing their connection to linear logic.} 
and two compatibility natural transformations (called ``linear distributions'')
\begin{align*}
    \partial_l &: A \star (B \dia C) \ra (A \star B) \dia C \\
    \partial_r &: (B \dia C) \star A \ra B \dia (C \star A),
\end{align*}
satisfying a large number of coherence diagrams~\cite{CS}.

Suppose $G = (G,\delta,\epsilon)$ is a comonad on a linearly distributive
category $\C$ which is a monoidal comonad on $\C$ with respect to both $\star$
and $\dia$, with structure maps $(G,\phi,\phi_0)$ and $(G,\psi,\psi_0)$
respectively. If, for $G$-coalgebras $A$, $B$, and $C$, the comonad $G$
satisfies
\begin{equation}\label{L1}
    \vcenter{\xygraph{{GA \star (GB \dia GC)}="1" [r(3)]
        {GA \star G(B \dia C)}="2" [r(2.7)]
        {G(A \star (B \dia C))}="3"
        "1"[d]{(GA \star GB) \dia GC}="4"
        "2"[d]{G(A \star B) \dia GC}="5"
        "3"[d]{G((A \star B) \dia C),}="6"
        "1":"2" ^-{1 \star \psi}
        "2":"3" ^-{\phi}
        "4":"5" ^-{\phi \dia 1}
        "5":"6" ^-{\psi}
        "3":"6" ^-{\partial_l}
        "1":"4" _-{\partial_l}}}
\end{equation}
it may be seen that the morphism $\partial_l$ becomes a $G$-coalgebra morphism.
If $G$ satisfies a similar axiom for $\partial_r$, i.e.,
\begin{equation}\label{L2}
    \vcenter{\xygraph{{(GB \dia GC) \star GA}="1" [r(3)]
        {G(B \dia C) \star GA}="2" [r(2.7)]
        {G((B \dia C) \star A)}="3"
        "1"[d]{GB \dia (GC \star GA)}="4"
        "2"[d]{GB \dia G(C \star A)}="5"
        "3"[d]{G(B \dia (C \star A)),}="6"
        "1":"2" ^-{\psi \star 1}
        "2":"3" ^-{\phi}
        "4":"5" ^-{1 \dia \phi}
        "5":"6" ^-{\psi}
        "3":"6" ^-{\partial_r}
        "1":"4" _-{\partial_r}}}
\end{equation}
then $\partial_r$ also becomes a $G$-coalgebra morphism. Thus,

\begin{proposition}
Given a linearly distributive category $\C$ and a comonad $G:\C \ra \C$
satisfying axioms~\eqref{L1} and~\eqref{L2}, the category $\C^G$ is a linearly
distributive category.
\end{proposition}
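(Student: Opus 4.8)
The plan is to verify that each piece of structure making $\C$ linearly distributive lifts to $\C^G$. Since $G$ is a monoidal comonad with respect to both $\star$ and $\dia$, the standard result cited in the excerpt already gives that $(\C^G, \star, I)$ and $(\C^G, \dia, J)$ are both monoidal categories, with the tensor products computed as in $\C$ and the coalgebra structures on $A \star B$ and $A \dia B$ supplied by $\phi$ and $\psi$ respectively (and on the units $I$, $J$ by $\phi_0$, $\psi_0$). So the two monoidal structures required for linear distributivity are in hand for free, and the only remaining data are the two linear distributions $\partial_l$ and $\partial_r$.

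First I would show that $\partial_l$ and $\partial_r$ lift to morphisms in $\C^G$. Concretely, for $G$-coalgebras $A$, $B$, $C$, I must check that $\partial_l : A \star (B \dia C) \ra (A \star B) \dia C$ is a morphism of $G$-coalgebras, i.e.\ that it commutes with the respective coaction maps. The coaction on the source $A \star (B \dia C)$ is built from the coactions on $A$, $B$, $C$ using $\psi$ (for the inner $\dia$) and then $\phi$ (for the outer $\star$), while the coaction on the target $(A \star B) \dia C$ is built using $\phi$ then $\psi$; tracing these through, the commuting square that must hold is exactly the outer boundary of diagram~\eqref{L1}, whose commutativity is the hypothesis. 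Similarly, diagram~\eqref{L2} is precisely the statement that $\partial_r$ is a coalgebra morphism. This is the step the excerpt has essentially pre-packaged, so it amounts to reading off the coaction maps and matching them to the two hexagons.

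Finally I would confirm that the lifted $\partial_l$, $\partial_r$ satisfy the coherence diagrams of a linearly distributive category. The key observation here is that the forgetful functor $U : \C^G \ra \C$ is faithful and strict monoidal for both $\star$ and $\dia$, and it sends the lifted linear distributions to the original $\partial_l$, $\partial_r$ in $\C$. Each coherence axiom for a linearly distributive category is an equation between two composites of structural morphisms; applying $U$ to such an equation in $\C^G$ yields the corresponding equation in $\C$, which holds because $\C$ is linearly distributive. Since $U$ is faithful, the equation already holds in $\C^G$. Thus all coherence conditions transfer automatically, and $\C^G$ is linearly distributive.

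I expect the main obstacle to be purely bookkeeping rather than conceptual: correctly identifying the induced coaction maps on $A \star (B \dia C)$ and $(A \star B) \dia C$ as the appropriate composites of $\phi$, $\psi$ and the comultiplications, and then verifying that the boundary of each hexagon in~\eqref{L1} and~\eqref{L2} really is the coalgebra-morphism square. Once the faithful strict monoidal forgetful functor is invoked for the coherence axioms, no further calculation is needed there, so the whole argument reduces to the naturality and coherence checks that the two displayed diagrams have been arranged to encapsulate.
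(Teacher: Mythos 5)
Your proposal is correct and follows essentially the same route as the paper: the two monoidal structures lift by Moerdijk's result since $G$ is monoidal for both $\star$ and $\dia$, axioms~\eqref{L1} and~\eqref{L2} (precomposed with the coactions, using naturality of the distributions) are exactly the statements that $\partial_l$ and $\partial_r$ are $G$-coalgebra morphisms, and the coherence equations transfer along the faithful strict monoidal forgetful functor. The paper gives no more detail than this, so your write-up is if anything slightly more explicit.
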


\begin{example}
Let $\C$ be a symmetric linearly distributive category and
$(B,\mu,\eta,\delta,\epsilon)$ a bialgebra in $\C$ with respect to $\dia$.
That is, the structure morphisms are given as
\begin{align*}
    \mu &: B \dia B \ra B  & \delta &: B \ra B \dia B \\
    \eta &: J \ra B        & \epsilon &: B \ra J.
\end{align*}
Then, $G = B \dia -$ is a comonad and is monoidal with respect to both $\star$
and $\dia$. The latter by $I \cong J \dia I \xra{\eta \dia 1} B * I$,
and the following,
\begin{align*}
    (B \dia U) \star (B \dia V)
        \xra{\makebox[8ex][c]{\scriptsize $\partial_r$}} &~
            B \dia (U \star (B \dia V)) \\
        \xra{\makebox[8ex][c]{\scriptsize $1 \dia (1 \star c)$}} &~
            B \dia (U \star (V \dia B)) \\
        \xra{\makebox[8ex][c]{\scriptsize $1 \dia \partial_l$}} &~
            B \dia ((U \star V) \dia B) \\
        \xra{\makebox[8ex][c]{\scriptsize $1 \dia c$}} &~
            B \dia (B \dia (U \star V)) \\
        \xra{\makebox[8ex][c]{\scriptsize $\cong$}} &~
            (B \dia B) \dia (U \star V) \\
        \xra{\makebox[8ex][c]{\scriptsize $\mu \star 1$}} &~ B \dia (U \star V).
\end{align*}
Rather large diagrams, which we leave to the faith of the reader, prove that $B
\dia -$ satisfies~\eqref{L1} and~\eqref{L2}, so that $\C^B =
\mathbf{Comod}_\C(B)$, the category of comodules of $B$, is a linearly
distributive category.
\end{example}

%==============================================================================%
\section{Lifting negations}
%==============================================================================%

Suppose now that $\C$ is a linearly distributive category equipped with
negations $S$ and $S'$ (corresponding to ${}^\bot(-)$ and $(-)^\bot$
in~\cite{CS}). That is, functors $S,S': \C^\op \ra \C$ together with the
following (dinatural) evaluation and coevaluation morphisms
\begin{equation}\label{negations}
\begin{array}{rl}
    SA \star A \xra{e_A} J   \qquad & \qquad  A \star S'A \xra{e'_A} J \\[1ex] 
    I \xra{n_A} A \dia SA  \qquad & \qquad  I \xra{n'_A} S'A \dia A,
\end{array}
\end{equation}
satisfying the four evident ``triangle identities''. One such is
\[
    \left(A \cong I \star A \xra{n \star 1} (A \dia SA) \star A \xra{\partial_r}
    A \dia (SA \star A) \xra{1 \dia e} A \dia J \cong A\right) = 1_A.
\]
If $\C$ is equipped with such negations we say simply that $\C$ is a
\emph{linearly distributive category with negations}.

We are interested to lift negations to $\C^G$. This means we must ensure that
the ``negation'' functors $S,S':\C^\op \ra \C$ lift to functors $(\C^G)^\op
\ra \C^G$, and the evaluation and coevaluation morphisms are in $\C^G$, i.e.,
are $G$-coalgebra morphisms.

The following is essentially known from~\cite{S}.

\begin{lemma}
A (contravariant) functor $S:\C^\op \ra \C$ may be lifted to a functor
$\wt{S}:(\C^G)^\op \ra \C^G$ such that the diagram
\[
    \xygraph{{(\C^G)^\op}="1" [r(1.5)] {\C^G}="2"
        "1"[d]{\C^\op}="3" "2"[d]{\C,}="4"
        "1":"2" ^-{\wt{S}}
        "2":"4" ^-{U}
        "1":"3" _-{U}
        "3":"4" ^-{S}}
\]
commutes, if and only if there is a natural transformation
\[
    \nu : S \ra GSG
\]
satisfying the following two axioms
\begin{equation}\label{lift-S}
    \vcenter{\xygraph{{S}="1" [r(1.6)]{GSG}="2" [d]{SG}="3"
        "1":"2" ^-{\nu}
        "2":"3" ^-{\epsilon_{SG}}
        "1":"3" _-{S\epsilon}}}
    \qquad
    \vcenter{\xygraph{{S}="1"
        [r(1.6)]{GSG}="2"
        [r(2.1)]{G^2SG}="3"
        "2"[d]{GSG}="4"
        "3"[d]{G^2SG^2.}="5"
        "1":"2" ^-{\nu}
        "2":"3" ^-{\delta_{SG}}
        "1":"4" _-{\nu}
        "4":"5" ^-{G\nu_G}
        "5":"3" _-{G^2S\delta}}}
\end{equation}
\end{lemma}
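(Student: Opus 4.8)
The plan is to exhibit a bijective correspondence between lifts $\wt{S}$ and natural transformations $\nu$ of the stated form, so that one exists precisely when the other does. Throughout I write a $G$-coalgebra as $a:A \ra GA$, I write $(GX,\delta_X)$ for the cofree coalgebra on $X$, and I note that since the square must commute, $\wt{S}$ is forced to act as $S$ on underlying objects and morphisms; the only freedom is the choice of coalgebra structure placed on $SA$.

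For the direction ($\Leftarrow$), suppose $\nu:S \ra GSG$ satisfies the two axioms of~\eqref{lift-S}. I would set
\[
    \sigma_{(A,a)} := \left(SA \xra{\nu_A} GSGA \xra{GSa} GSA\right),
\]
define $\wt{S}(A,a) = (SA,\sigma_{(A,a)})$, and put $\wt{S}f = Sf$ on morphisms. Three things then need checking: that $\sigma_{(A,a)}$ satisfies the counit law, that it satisfies coassociativity, and that $Sf$ is a coalgebra morphism whenever $f$ is. The counit law follows quickly: naturality of $\epsilon$ rewrites $\epsilon_{SA}\circ\sigma_{(A,a)}$ as $Sa\circ\epsilon_{SGA}\circ\nu_A$, the first axiom replaces $\epsilon_{SGA}\circ\nu_A$ by $S\epsilon_A$, and contravariance with the counit law $\epsilon_A\circ a = 1_A$ collapses $Sa\circ S\epsilon_A = S(\epsilon_A\circ a)$ to the identity. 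Functoriality amounts to the identity $GSGf\circ\nu_B = \nu_A\circ Sf$, which is exactly the naturality square for $\nu$ once the coalgebra-morphism equation $Gf\circ a = b\circ f$ is used.

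For the direction ($\Ra$), suppose a lift $\wt{S}$ is given, and write $\sigma_{(A,a)}:SA \ra GSA$ for the structure it places on $SA$. Applying $\wt{S}$ to the cofree coalgebra yields a structure $\sigma_{(GX,\delta_X)}$ on $SGX$, and I would define
\[
    \nu_X := \left(SX \xra{S\epsilon_X} SGX \xra{\sigma_{(GX,\delta_X)}} GSGX\right).
\]
Naturality of $\nu$ comes from functoriality of $\wt{S}$ on cofree coalgebras, and the first axiom is immediate from the counit law for $\sigma_{(GX,\delta_X)}$. To confirm the two constructions are mutually inverse I would use that $a:(A,a) \ra (GA,\delta_A)$ is itself a coalgebra morphism (its defining equation is the coassociativity of $(A,a)$); applying $\wt{S}$ and invoking the commuting square gives $GSa\circ\sigma_{(GA,\delta_A)} = \sigma_{(A,a)}\circ Sa$, whence $GSa\circ\nu_A = \sigma_{(A,a)}\circ Sa\circ S\epsilon_A = \sigma_{(A,a)}$, recovering the formula of the first half.

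The main obstacle in both directions is the coassociativity condition and its match with the second axiom of~\eqref{lift-S}. Expanding $\delta_{SA}\circ\sigma_{(A,a)} = G\sigma_{(A,a)}\circ\sigma_{(A,a)}$ with $\sigma_{(A,a)} = GSa\circ\nu_A$ produces a diagram that must be reorganized using naturality of $\delta$ and $\nu$, the coassociativity $Ga\circ a = \delta_A\circ a$ of $(A,a)$, and contravariant functoriality of $S$; the second axiom is precisely the identity among natural transformations $S \ra G^2SG$ that makes this reorganization close. I expect this to be the only genuinely lengthy verification, the counit and naturality parts being routine.
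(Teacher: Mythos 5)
Your proposal is correct and takes essentially the same approach as the paper: your forward construction $\wt{S}(A,a) = (SA,\, GSa \o \nu_A)$, $\wt{S}f = Sf$ is exactly the paper's, and the paper likewise handles the converse by an explicit formula extracting $\nu$ from the lifted coalgebra structure, leaving all of the verifications (counit, coassociativity, naturality) to the reader, whereas you at least name the ingredients that make them close. If anything your converse is slightly more careful than the paper's: the paper's formula $\nu := GS\epsilon_A \o \wt{\gamma}$ only produces components of $\nu$ at objects underlying $G$-coalgebras, whereas your $\nu_X := \sigma_{(GX,\delta_X)} \o S\epsilon_X$ built from the cofree coalgebras gives a component at every object of $\C$, which is what a natural transformation $S \ra GSG$ actually requires.
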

This may be viewed as a distributive law of a contravariant functor over a
comonad~\cite{S}. In this case, we say that \emph{$S$ may be lifted to $\C^G$},
and a functor $\wt{S}:(\C^G)^\op \ra \C^G$ is defined as
\[
    \wt{S}(A,\gamma) = \big(SA,\: SA \xra{\nu} GSGA \xra{GS\gamma} GA\big)
    \quad\qquad
    \wt{S}(f) = Sf.
\]
(To see the reverse direction, suppose $(A,\gamma)$ is a coalgebra and
$\wt{S}$ is a functor $\C^G \ra \C^G$, so that $\wt{S}A = (SA,\wt{\gamma})$ is
again a coalgebra. Define
\[
    \nu := SA \xra{\wt{\gamma}} GSA \xra{GS\epsilon_A} GSGA,
\]
which may be seen to satisfy the axioms in \eqref{lift-S}.)
We will usually let the context differentiate between $S$ and $\wt{S}$ and
simply write $S$ in both cases.

Now, suppose $S$ and $S'$ are equipped with natural transformations
\[
    \nu:S \ra GSG \qquad \text{and} \qquad \nu':S' \ra GS'G.
\]
such that they can be lifted to $\C^G$. It remains to lift the evaluation and
coevaluation morphisms~\eqref{negations}. Consider the following axioms.

\begin{equation}\label{Le}
    \vcenter{\xygraph{{SA \star GA}="1"
        [r(2.5)]{SA \star A}="2"
        [r(2.5)]{J}="3"
        "1"[d]{GSGA \star G^2A}="4"
        [r(2.7)]{G(SGA \star GA)}="5"
        [r(2.3)]{GJ}="6"
        "1":"4" _-{\nu \star \delta}
        "4":"5" ^-{\phi}
        "5":"6" ^-{Ge_{GA}}
        "1":"2" ^-{1 \star \epsilon}
        "2":"3" ^-{e_A}
        "3":"6" ^-{\psi_0}}}
\end{equation}
\begin{equation}\label{Ln}
    \vcenter{\xygraph{{I}="1"
        [d]{GA \dia SGA}="2"
        [r(3)]{GA \dia GSG^2A}="3"
        [r(3)]{G(A \dia SG^2A)}="4"
        [u]{G(A \dia SGA)}="5"
        "1"[r(1)]{GI}="6"
        [r(2)]{G(A \dia SA)}="7"
        "1":"2" _-{n}
        "2":"3" ^-{1 \dia \nu}
        "3":"4" ^-{\phi}
        "4":"5" _-{G(1 \dia S\delta)}
        "1":"6" ^-{\phi_0}
        "6":"7" ^-{Gn}
        "7":"5" ^-{G(1 \dia S\epsilon)}}}
\end{equation}
\begin{equation}\label{Le'}
    \vcenter{\xygraph{{GA \star S'A}="1"
        [r(2.5)]{A \star S'A}="2"
        [r(2.5)]{J}="3"
        "1"[d]{G^2A \star GS'GA}="4"
        [r(2.7)]{G(GA \star S'GA)}="5"
        [r(2.3)]{GJ}="6"
        "1":"4" _-{\delta \star \nu'}
        "4":"5" ^-{\phi}
        "5":"6" ^-{Ge'_{GA}}
        "1":"2" ^-{\epsilon \star 1}
        "2":"3" ^-{e'_A}
        "3":"6" ^-{\psi_0}}}
\end{equation}
\begin{equation}\label{Ln'}
    \vcenter{\xygraph{{I}="1"
        [d]{S'GA \dia GA}="2"
        [r(3)]{GS'G^2A \dia GA}="3"
        [r(3)]{G(S'G^2A \dia A)}="4"
        [u]{G(S'GA \dia A)}="5"
        "1"[r(1)]{GI}="6"
        [r(2)]{G(S'A \dia A)}="7"
        "1":"2" _-{n'}
        "2":"3" ^-{\nu' \dia 1}
        "3":"4" ^-{\phi}
        "4":"5" _-{G(S'\delta \dia 1)}
        "1":"6" ^-{\phi_0}
        "6":"7" ^-{Gn'}
        "7":"5" ^-{G(S'\epsilon \dia 1)}}}
\end{equation}

\begin{proposition}\label{prop-lift-neg}
Suppose $\C$ is a linearly distributive category with negation, $G$ is a
monoidal comonad satisfying axioms~\eqref{L1} and~\eqref{L2} (so that $\C^G$ is
linearly distributive), and that $S$ and $S'$ may be lifted to $\C^G$. Then,
$G$ satisfies axioms~\eqref{Le}, \eqref{Ln}, \eqref{Le'}, and~\eqref{Ln'} if
and only if $\C^G$ is a linearly distributive category with negation.
\end{proposition}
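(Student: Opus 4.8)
The plan is to show that the four axioms \eqref{Le}, \eqref{Ln}, \eqref{Le'}, and \eqref{Ln'} are precisely the conditions asserting that the four dinatural morphisms $e_A$, $n_A$, $e'_A$, $n'_A$ of \eqref{negations} are $G$-coalgebra morphisms between the appropriate coalgebras in $\C^G$. Once this is established, the equivalence follows formally: by hypothesis $\C^G$ is already a linearly distributive category (Proposition 2.1) and the negation functors $S,S'$ have been lifted to $\wt{S},\wt{S'}:(\C^G)^\op \ra \C^G$, so to say that $\C^G$ is a linearly distributive category with negation is exactly to say that these lifted functors are equipped with evaluation/coevaluation morphisms satisfying the triangle identities. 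Since the underlying functor $U:\C^G \ra \C$ is faithful and the triangle identities for the candidate morphisms hold in $\C$ (because they hold in $\C$ for the negations on $\C$, and $U$ preserves the relevant structure), the only remaining content is that the four morphisms live in $\C^G$.

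First I would fix notation for the relevant coalgebra structures. For a $G$-coalgebra $(A,\gamma)$, the lifted functor gives $\wt{S}(A,\gamma) = (SA, SA \xra{\nu} GSGA \xra{GS\gamma} GA)$, and similarly for $\wt{S'}$. The object $J$ carries its canonical coalgebra structure $\psi_0:J \ra GJ$ coming from the monoidal comonad structure $(G,\psi,\psi_0)$ with respect to $\dia$, and $I$ carries $\phi_0:I \ra GI$ from $(G,\phi,\phi_0)$ with respect to $\star$. The tensor $\star$ of two coalgebras uses $\phi$, and $\dia$ uses $\psi$. With these in hand, I would write down, for each of the four morphisms, the square asserting that it commutes with the coalgebra structure maps on source and target.

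The core step is then to verify that, for instance, the coalgebra-morphism square for $e_A:SA \star A \ra J$ is literally diagram \eqref{Le}. Here the source $\wt{S}(A,\gamma)\star(A,\gamma)$ has coalgebra structure built from $\nu$, $GS\gamma$, $\gamma$, and $\phi$, while the target $J$ has structure $\psi_0$; chasing the definitions and using the comonad counit/comultiplication identities together with naturality of $\phi$ and $\psi$ should collapse the general coalgebra-morphism condition to the displayed axiom \eqref{Le}, in which $\gamma$ has been specialized by dinaturality to the free case $\delta_A:GA \ra G^2A$ (this is why \eqref{Le} features $GA$, $\nu\star\delta$, and $Ge_{GA}$ rather than an arbitrary $\gamma$). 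The reduction to the free coalgebra $(GA,\delta_A)$ is the key simplification: by the dinaturality of $e$ and the universal property of cofree coalgebras it suffices to check the condition there, which is exactly the content of the four axioms. I expect the main obstacle to be precisely this bookkeeping — showing that the axiom stated on cofree coalgebras is equivalent to (not merely implied by) the coalgebra-morphism condition for an arbitrary coalgebra, which requires using that every coalgebra $(A,\gamma)$ is a retract (via $\gamma$ and $\epsilon_A$) of the cofree one and that $\nu$ satisfies the compatibility axioms \eqref{lift-S}. The cases \eqref{Ln}, \eqref{Le'}, \eqref{Ln'} are handled identically, mutatis mutandis, using $n_A:I \ra A \dia SA$, and the $\dia$-monoidal structure $\psi$, $\psi_0$ together with the linear distributions $\partial_l,\partial_r$ as they enter the triangle identities.
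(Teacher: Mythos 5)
Your proposal is correct and follows essentially the same route as the paper: the paper's proof likewise reduces the statement to showing that each of axioms~\eqref{Le}--\eqref{Ln'} holds if and only if the corresponding (co)evaluation map is a $G$-coalgebra morphism, with the ``only if'' direction a naturality/dinaturality chase for an arbitrary coalgebra $(A,\gamma)$ and the ``if'' direction recovering the axiom from the coalgebra-morphism condition at the cofree coalgebra $(GA,\delta)$ using the counit and the axioms~\eqref{lift-S}. Your identification of the cofree-coalgebra reduction as the key bookkeeping step is exactly where the content of the paper's two displayed diagrams lies.
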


\begin{proof}
Suppose $(A,\gamma)$ is a $G$-coalgebra. We start by proving that
axiom~\eqref{Le} holds if and only if $e:SA \star A \ra J$ is a $G$-coalgebra
morphism. The following diagram proves the ``only if'' direction,
\[
    \xygraph{{SA \star A}="1"
        [r(2.5)]{GSGA \star GA}="2"
        [r(2.5)]{G(SGA \star A)}="3"
        [r(2)d]{G(SA \star A)}="4"
        "1"[d]{SA \star GA}="5"
        "2"[d]{GSGA \star G^2A}="6"
        "3"[d]{G(SGA \star GA)}="7"
        "4"[d]{GJ,}="8"
        "5"[d]{SA \star A}="9"
        "9"[r(3.5)]{J}="10"
        "9"[r(3)u(0.5)]{\txt{\footnotesize{\eqref{Le}}}}
        "1":@<-8pt>@/_20pt/"9" _-1
        "1":"2" ^-{\nu \star \gamma}
        "2":"3" ^-{\phi}
        "3":"4" ^-{G(S\gamma \star 1)}
        "1":"5" ^-{1 \star \gamma}
        "2":"6" _-{1 \star G\gamma}
        "3":"7" _-{G(1 \star \gamma)}
        "4":"8" ^-{Ge}
        "5":"6" ^-{\nu \star \delta}
        "6":"7" ^-{\phi}
        "7":"8" ^-{Ge}
        "5":"9" ^-{1 \star \epsilon}
        "9":"10" ^-{e}
        "10":"8" ^-{\psi_0}}
\]
and this next diagram the ``if'' direction
\[
    \xygraph{{SA \star GA}="1"
        [r(3.75)]{GSGA \star G^2A}="2"
        "1"[l(1.5)d]{SA \star A}="3"
        "1"[d]{SGA \star GA}="4"
        [r(2.4)]{GSG^2A \star G^2A}="5"
        [r(2.7)]{GSGA \star G^2A}="6"
        [r(2.3)]{G(SGA \star GA)}="7"
        "4"[d]{J}="8"
        "7"[d]{GJ,}="9"
        "1":"3" _-{1 \star \epsilon}
        "1":"4" ^-{S\epsilon \star 1}
        "1":"2" ^-{\nu \star \delta}
        "2":"5" _-{GSG\epsilon \star 1}
        "2":"6" ^-{1}
        "2":@/^3ex/"7" ^-{\phi}
        "3":"8" _-{e}
        "4":"8" ^-{e}
        "4":"5" ^-{\nu \star \delta}
        "5":"6" ^-{GS\delta \star 1}
        "6":"7" ^-{\phi}
        "7":"9" ^-{Ge}
        "8":"9" ^-{\psi_0}
        }
\]
where the bottom square commutes as $e_{GA}$ is a $G$-coalgebra morphism.

Next we prove that axiom~\eqref{Ln} holds if and only if $n:I
\ra A \dia SA$ is a $G$-coalgebra morphism. The ``only if'' direction is given
by
\[
    \xygraph{{I}="1"
        [d]{A \dia SA}="4"
        [r(1.5)]{GA \dia SGA}="5"
        [r(2.1)]{GA \dia GSG^2A}="6"
        [r(2.3)]{G(A \dia SG^2A)}="7"
        [r(2.7)]{G(A \dia SGA)}="8"
        "1"[r(4.3)]{GI}="2"
        "5"[d]{GA \dia SA}="9"
        "6"[d]{GA \dia GSGA}="10"
        "7"[d]{G(A \dia SGA)}="11"
        "8"[d]{G(A \dia SA),}="12"
        "8"[u]{G(A \dia SA)}="3"
        "3"[d(0.4)l(3.6)]{\txt{\footnotesize{\eqref{Ln}}}}
        "1":"2" ^-{\phi_0}
        "2":"3" ^-{Gn}
        "3":"8" _-{G(1 \dia S\epsilon)}
        "8":"12" _-{G(1 \dia S\gamma)}
        "1":"5" ^-n
        "5":"6" ^-{1 \dia \nu}
        "6":"7" ^-{\phi}
        "7":"8" ^-{G(1 \dia S\delta)}
        "1":"4" _-n
        "4":"9" _-{\gamma \dia 1}
        "9":"10" ^-{1 \dia \nu}
        "10":"11" ^-{\phi}
        "11":"12" ^-{G(1 \dia S\gamma}
        "5":"9" ^-{1 \dia S\gamma}
        "6":"10" _-{1 \dia GSG\gamma}
        "7":"11" _-{G(1 \dia SG\gamma)}
        "3":@<15pt>@/^20pt/"12" ^-1}
\]
and the ``if'' direction by
\[
    \xygraph{{I}="1"
        [d]{GA \dia SGA}="3"
        [r(2.3)]{G^2A \dia GSG^2A}="4"
        [r(2.5)]{G(GA \dia SG^2A)}="5"
        [r(2.9)]{G(GA \dia SGA)}="6"
        [r(1.8)]{G(A \dia SA)}="7"
        "6"[u]{GI}="2"
        "4"[d]{GA \dia GSG^2A}="8"
        "5"[d]{G(A \dia SG^2A)}="9"
        "6"[d]{G(A \dia SGA),}="10"
        "1":"2" ^-{\phi_0}
        "1":"3" _-{n}
        "2":"6" _-{Gn}
        "2":"7" ^-{Gn}
        "3":"4" ^-{\delta \dia \nu}
        "3":"8" _-{1 \dia \nu}
        "4":"5" ^-{\psi}
        "4":"8" ^-{G\epsilon \dia 1}
        "5":"6" ^-{G(1 \dia S\delta)}
        "5":"9" ^-{G(\epsilon \dia 1)}
        "6":"10" _-{G(\epsilon \dia 1)}
        "7":"10" ^-{G(1 \dia G\epsilon)}
        "8":"9" ^-{\psi}
        "9":"10" ^-{G(1 \dia S\delta)}}
\]
where the top square commutes as $n_{GA}$ is a $G$-coalgebra morphism.

The remaining two axioms are proved similarly.
\end{proof}

%==============================================================================%
\section{Star-autonomous comonads}
%==============================================================================%

Suppose $\C = (\C,\ox,I)$ is a star-autonomous category. A star-autonomous
comonad $G:\C \ra \C$ is a comonad satisfying axioms (described below) so that
$\C^G$ becomes a star-autonomous category~\cite{PS}. In this section we show
that comonads as in Proposition~\ref{prop-lift-neg} and star-autonomous
comonads coincide.

We recall the definition of star-autonomous comonad~\cite{PS}, but, as it
suits our needs better here, we present a more symmetric version. First recall
that a star-autonomous category may be defined as a monoidal category $\C =
(\C,\ox,I)$ equipped with an equivalence
\[
    S \dashv S' : \C^\op \ra \C
\]
such that
\begin{equation}\label{star-auto-iso}
    \C(A \ox B, SC) \cong \C(A, S(B \ox C)),
\end{equation}
natural in $A,B,C \in \C$. The functor $S$ is called the \emph{left star
operation} and $S'$ the \emph{right star operation}.

By the Yoneda lemma, the isomorphism in~\eqref{star-auto-iso} determines, and
is determined by, the two following ``evaluation'' morphisms:
\[
    e = e_{A,B} : S(A \ox B) \ox A \ra SB
    \quad \text{and} \quad
    e' = e'_{B,A} : B \ox S'(A \ox B) \ra S'A.
\]

\begin{definition}
A \emph{star-autonomous comonad} on a star-autonomous category $\C$ is a
monoidal comonad $G:\C \ra \C$ equipped with
\[
    \nu:S \ra GSG \qquad \text{and} \qquad \nu':S' \ra GS'G,
\]
satisfying~\eqref{lift-S} (i.e., $S,S'$ may be lifted to $\C^G$), and this
data must be such that the following four diagrams commute.
\[
    \xygraph{{SS'G}="1" [r(2)]{G}="2" "1"[d]{GSGS'G}="3" "2"[d]{GSS'}="4"
        "1":"2" ^-\cong
        "2":"4" ^-\cong
        "1":"3"_-{\nu}
        "3":"4"^-{GS\nu'}}
    \qquad\qquad
    \xygraph{{S'SG}="1" [r(2)]{G}="2" "1"[d]{GS'GSG}="3" "2"[d]{GS'S}="4"
        "1":"2" ^-\cong
        "2":"4" ^-\cong
        "1":"3"_-{\nu'}
        "3":"4"^-{GS\nu}}
\]
\[
\vcenter{\xygraph{{S(A \ox B) \ox GA}="1"
    [r(3)]{S(A \ox B) \ox A}="2"
    [r(2.2)]{SB}="3"
    "1"[l(0.4)d]{GSG(A \ox B) \ox G^2A}="4"
    "3"[r(0.4)d]{GSGB}="5"
    "4"[r(0.4)d]{G(SG(A \ox B) \ox GA)}="6"
    [r(5.2)]{G(S(GA \ox GB) \ox GA)}="7"
    "1":"2" ^-{1 \ox \epsilon}
    "2":"3" ^-{e_{A,B}}
    "3":"5" ^-{\nu}
    "1":"4" _-{\nu \ox \delta}
    "4":"6" _-{\phi}
    "6":"7" _-{G(S\phi \ox 1)}
    "7":"5" _-{Ge_{GA,GB}}}}
\]
\[
\vcenter{\xygraph{{GB \ox S'(A \ox B)}="1"
    [r(3)]{B \ox S'(A \ox B)}="2"
    [r(2.2)]{S'A}="3"
    "1"[l(0.4)d]{G^2B \ox GS'G(A \ox B)}="4"
    "3"[r(0.4)d]{GS'GA}="5"
    "4"[r(0.4)d]{G(GB \ox S'G(A \ox B))}="6"
    [r(5.2)]{G(GB \ox S'(GA \ox GB))}="7"
    "1":"2" ^-{\epsilon \ox 1}
    "2":"3" ^-{e'_{B,A}}
    "3":"5" ^-{\nu'}
    "1":"4" _-{\delta \ox \nu'}
    "4":"6" _-{\phi}
    "6":"7" _-{G(1 \ox S'\phi)}
    "7":"5" _-{Ge'_{GB,GA}}}}
\]
\end{definition}
The first two diagrams above ensure that the equivalence $S \simeq S'$ lifts to
$\C^G$, while the latter two diagrams above respectively ensure that $e$ and
$e'$ are $G$-coalgebra morphisms, so that the isomorphism~\eqref{star-auto-iso}
also lifts to $\C^G$.

We wish to show that star-autonomous comonads and comonads as in
Proposition~\ref{prop-lift-neg} coincide. It should not be surprising given the
following theorem.

\begin{theorem}[{\cite[Theorem~4.5]{CS}}]\label{star-LDC-same}
The notions of linearly distributive categories with negation and
star-autonomous categories coincide.
\end{theorem}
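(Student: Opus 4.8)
The plan is to give mutually inverse constructions between the two kinds of structure, always identifying the monoidal product $\ox$ of a star-autonomous category with the tensor $\star$ of a linearly distributive category and obtaining the par $\dia$ as the de Morgan dual of $\star$.

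For the passage from a linearly distributive category with negation to a star-autonomous category I would set $\ox := \star$. The triangle identities satisfied by $e,e',n,n'$ say exactly that each pair $(A,SA)$ and $(S'A,A)$ is a dual pair for the $(\star,\dia)$ pairing; by the usual ``snake'' argument these make $S,S' : \C^\op \ra \C$ into an adjoint equivalence, and also yield, for each dual pair, a natural isomorphism
\[
    \C(X \star A, Y) \cong \C(X, Y \dia SA),
\]
built from $n$, $\partial_l$, $\partial_r$ and $e$. Together with the de Morgan isomorphism $S(B \star C) \cong SC \dia SB$ --- itself a consequence of the negation structure --- this produces the required
\[
    \C(A \ox B, SC) \cong \C(A, SC \dia SB) \cong \C(A, S(B \ox C)),
\]
natural in $A,B,C$, so that $(\C,\ox,I,S,S')$ is star-autonomous.

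Conversely, from a star-autonomous category $(\C,\ox,I,S,S')$ I would keep $\star := \ox$ and define $A \dia B := S(S'B \star S'A)$ with unit the dualizing object $J := SI \cong S'I$. The monoidal structure on $\dia$ is transported from that of $\star$ through the equivalence $S \simeq S'$, and the linear distributions $\partial_l,\partial_r$ are assembled from the associator of $\star$ and the star-autonomous evaluations $e_{A,B} : S(A \ox B) \ox A \ra SB$ and $e'_{B,A}$. The negation data~\eqref{negations} are then the (co)units of the equivalence read off through this definition of $\dia$, and the triangle identities become instances of the star-autonomous adjunction isomorphisms.

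The main obstacle is the coherence verification. In the forward direction one must check that the hom-isomorphism displayed above is genuinely natural and invertible; by the Yoneda lemma this reduces to the commutativity of a few diagrams relating $\partial_l,\partial_r$ to $e,e'$, each of which unwinds to a triangle identity. The more laborious direction is showing that the de Morgan $\dia$ is coherently monoidal and that $\partial_l,\partial_r$ satisfy the full list of linearly distributive coherence diagrams of~\cite{CS}; here each diagram reduces, after inserting the definition of $\dia$, to an instance of the associativity or unit coherence of $\star$ together with the naturality of the evaluations, but there are many of them. Finally I would verify that the two passages are mutually inverse up to the evident coherent isomorphisms, which gives the claimed coincidence of the two notions.
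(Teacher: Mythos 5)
Your proposal is correct and follows essentially the same route as the source: the paper itself gives no proof of this statement --- it quotes \cite[Theorem~4.5]{CS} and merely records the relevant constructions, namely the identification $\star = \ox$, the de~Morgan definitions $A \dia B := S'(SB \star SA) \cong S(S'B \star S'A)$ and $J := SI$ of~\eqref{star-LDC}, and the explicit formulas for $e_{A,B}$, $e'_{B,A}$, $\partial_l$, $\partial_r$, $n_A$, $n'_A$ in terms of the other structure. Your outline reproduces exactly these constructions (dual pairs from the triangle identities, the adjunctions $-\star A \dashv -\dia SA$, Yoneda, and the transported $\dia$), and the only outstanding work is the lengthy coherence verification you correctly identify and which is carried out in \cite{CS}.
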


Given a star-autonomous category, identifying $\star := \ox$ (and the units
$I := I_\star = I_\ox$) and defining
\begin{equation}\label{star-LDC}
A \dia B := S'(SB \star SA) \cong S(S'B \star S'A) 
\qquad
J := SI \cong S'I
\end{equation}
gives a linearly distributive category~\cite{CS}. The negations of course come
from $S$ and $S'$. In~\cite{CS}, they consider the symmetric case, but the
correspondence between linearly distributive categories with negation and
star-autonomous categories holds in the noncommutative case as well.

Now, given Theorem~\ref{star-LDC-same}, Proposition~\ref{prop-lift-neg} says
that if $\C$ is star-autonomous, and $G$ is such a comonad, then $\C^G$ is
star-autonomous. We now compare the two definitions.

Suppose now that $G$ is a comonad on a linear distributive category $\C$ as in
Proposition~\ref{prop-lift-neg}. We
wish to show that it is a star-autonomous comonad. Rather than proving the
axioms, it is simpler to show directly that the morphisms under consideration
are $G$-coalgebra morphisms. To this end, the equivalence $S \simeq S'$ is
given by the equations
\[
    A \cong I \star A \xra{n'_{SA} \star 1} (S'SA \dia SA) \star A
    \xra{\partial_r} S'SA \dia (SA \star A) \xra{1 \dia n} S'SA \dia J
    \cong S'SA
\]
and
\[
    S'SA \cong I \star S'SA \xra{n_A \star 1} (A \dia SA) \star S'SA
    \xra{\partial_r} A \dia (SA \star S'SA) \xra{1 \dia e'_{SA}} A \dia J
    \cong A,
\]
and $e_{A,B}$ and $e'_{B,A}$ are respectively defined as
\[
    \xygraph{{S(A \star B) \star A}="1"
        [d]{S(A \star B) \star A \star I}="3"
        [d]{S(A \star B) \star A \star (B \dia SB)}="5"
        [r(5)]{(S(A \star B) \star A \star B) \dia SB}="6"
        [u]{J \dia SB}="4"
        [u]{SB}="2"
        "1":"3"_-\cong
        "3":"5"_-{1 \star 1 \star n}
        "5":"6"^-{\partial_l}
        "6":"4"_-{e_{A \star B} \dia 1}
        "4":"2"_-\cong
        "1":@{.>}"2"^-{e_{A,B}}}
\]
\[
    \xygraph{{B \star S'(A \star B)}="1"
        [d]{I \star B \star S'(A \star B)}="3"
        [d]{(S'A \dia A) \star B \star S'(A \star B)}="5"
        [r(5)]{S'A \dia (A \star B \star S'(A \star B))}="6"
        [u]{S'A \dia J}="4"
        [u]{SB}="2"
        "1":"3"_-\cong
        "3":"5"_-{n' \star 1 \star 1}
        "5":"6"^-{\partial_r}
        "6":"4"_-{1 \dia e'_{A \star B}}
        "4":"2"_-\cong
        "1":@{.>}"2"^-{e'_{B,A}}}
\]
In the situation of Proposition~\ref{prop-lift-neg}, we see that all four of
these morphisms are given as composites of $G$-coalgebra morphisms, and thus,
are $G$-coalgebra morphisms themselves. Therefore, $G$ is a star-autonomous
comonad. 

In the other direction suppose $G$ is a star-autonomous comonad on a
star-autonomous category $\C$. It is similar to show that it is a comonad
satisfying the requirements of Proposition~\ref{prop-lift-neg}. Using the
identifications in~\eqref{star-LDC}, the two linear distributions are defined
as follows.
\[
    \xygraph{{A \star (B \dia C)}="1"
        [d]{A \ox S'(SC \ox SB)}="3"
        [r(1.3)d]{A \ox S'(SC \ox S(A \ox B) \ox A)}="5"
        [r(1.3)u]{S'(SC \ox S(A \ox B))}="4"
        [u]{(A \star B) \dia C}="2"
        "1":@{.>}"2" ^-{\partial_l}
        "1":"3" _-\cong
        "3":"5" _<(0.2){1 \ox S'(1 \ox e)}
        "5":"4" _<(0.5){e'}
        "4":"2" _-\cong}
\quad
    \xygraph{{(B \dia C) \star A}="1"
        [d]{S(S'C \ox S'B) \ox A}="3"
        [r(1.3)d]{S(A \ox S'(C \ox A) \ox S'B) \ox A)}="5"
        [r(1.3)u]{S(S'(C \ox A) \ox S'B)}="4"
        [u]{B \dia (C \star A)}="2"
        "1":@{.>}"2" ^-{\partial_r}
        "1":"3" _-\cong
        "3":"5" _<(0.2){S(e' \ox 1) \ox 1}
        "5":"4" _<(0.4){e}
        "4":"2" _-\cong}
\]
The evaluation maps $e_A$ and $e'_A$ are defined as $e_{A,I}$ and $e'_{A,I}$,
and the coevaluation maps $n_A$ and $n'_A$ as
\[
    n_A = \Big(I \cong SS'I \xra{Se'_{A,I}} S(A \ox S'A) = A \dia SA\Big)
\]
\[
    n'_A = \Big(I \cong S'SI \xra{S'e_{A,I}} S'(SA \ox A) = S'A \dia A\Big)
\]
Again, each morphism is a $G$-coalgebra morphism, or composite thereof, and
therefore is itself a $G$-coalgebra morphism.

Thus, both notions coincide, and we will simply call either notion a
\emph{star-autonomous comonad}, and let context differentiate the
axiomatization.

\begin{example}
Any Hopf algebra $H$ in a star-autonomous category $\C$ gives rise to a
star-autonomous comonad $H \ox - : \C \ra \C$. See~\cite[pg. 3515]{PS} for details.
\end{example}

\begin{example}
If $\C$ is a symmetric closed monoidal category with finite products, then we
may apply the Chu construction~\cite{B} to produce a star-autonomous category
$\Chu(\C)$. $\C$ fully faithfully embeds into $\Chu(\C)$,
\[
    \C \hookrightarrow \Chu(\C)
\]
and this functor is strong symmetric monoidal. Thus, any Hopf algebra in $\C$
becomes a Hopf algebra in $\Chu(\C)$, and thus, an example of a star-autonomous
comonad.
\end{example}

%==============================================================================%
\section{The compact case $\star = \dia$}
%==============================================================================%

If $\C$ is a linearly distributive category with negation for which $\star =
\dia$ (and thus, $I = J$), then $\C$ is an autonomous (= rigid) category.
The functor $S$ provides left duals, while $S'$ provides right duals. It is
not hard to see that in this case, any star-autonomous monad $G$ (after
dualizing) is a Hopf monad~\cite{BV}. Set $\star = \dia$ and $I = J$ and
dualize axioms~\eqref{Le},~\eqref{Ln},~\eqref{Le'}, and~\eqref{Ln'}. They
correspond in~\cite{BV} to axioms (23), (22), (21), and~(20) respectively.
(In their notation ${}^\vee(-) = S$ and $(-)^\vee = S'$.) Therefore, we have:

\begin{proposition}
Star-autonomous monads on autonomous categories are Hopf monads.
\end{proposition}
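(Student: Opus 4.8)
The plan is to read the result directly off the axiomatization of Proposition~\ref{prop-lift-neg}, specialized to the compact case and then dualized; essentially no new computation is needed. First I would observe that when $\star = \dia$ and $I = J$, a linearly distributive category with negation collapses to an autonomous category: the two tensor products coincide, the linear distributions $\partial_l$ and $\partial_r$ degenerate, and the functors $S$ and $S'$ become precisely the left and right dual functors (matching the convention ${}^\vee(-) = S$, $(-)^\vee = S'$ of~\cite{BV}). The dinatural families $e, e', n, n'$ of~\eqref{negations} then become the usual evaluations and coevaluations exhibiting these duals. By Theorem~\ref{star-LDC-same} and the discussion following it, a star-autonomous comonad in this setting is exactly a monoidal comonad $G$ equipped with $\nu, \nu'$ lifting $S, S'$ and satisfying the four axioms~\eqref{Le},~\eqref{Ln},~\eqref{Le'}, and~\eqref{Ln'}.

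Next I would dualize. Since~\cite{BV} work with monads and their notion of Hopf monad, I would pass from the comonad picture to the monad picture by reversing the comonad structure (equivalently, working in the opposite category), so that $(\delta,\epsilon)$ becomes a monad multiplication and unit and the monoidal comonad structure becomes an opmonoidal (bimonad) structure; a star-autonomous monad is by definition the dual of a star-autonomous comonad, hence satisfies the formal duals of~\eqref{Le}--\eqref{Ln'}. The key step is then to recognize these dualized diagrams: with the identifications above, axiom~\eqref{Le} becomes axiom (23) of~\cite{BV}, axiom~\eqref{Ln} becomes (22), axiom~\eqref{Le'} becomes (21), and axiom~\eqref{Ln'} becomes (20). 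Since a Hopf monad on an autonomous category is precisely a bimonad for which these compatibility conditions between the monad structure and the duals hold, this identification yields the result.

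The main obstacle is purely one of bookkeeping in this matching step. One must track the dualization carefully: the opmonoidal structure map $\phi$ and the unit comparisons $\phi_0, \psi_0$ have to be sent to the correct components of the bimonad structure of~\cite{BV}, and one must check that after applying the defining isomorphisms for the duals (for instance $A \dia SA \cong I$ via $n$) the source and target objects of each dualized diagram coincide with those appearing in the corresponding~\cite{BV} axiom. One must also confirm that the assignment of $S$ and $S'$ to the left and right duals is consistent throughout, so that $e, n$ (belonging to $S$) and $e', n'$ (belonging to $S'$) land on the correct side. No genuine calculation is required beyond verifying that the two families of commutative diagrams coincide under this translation, which is why the correspondence is indeed ``not hard to see.''
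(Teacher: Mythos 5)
Your proposal matches the paper's argument exactly: specialize to $\star = \dia$, $I = J$ so that $S$ and $S'$ become the left and right dual functors, dualize the comonad axioms, and identify~\eqref{Le},~\eqref{Ln},~\eqref{Le'},~\eqref{Ln'} with axioms (23), (22), (21), (20) of~\cite{BV} respectively. The paper gives no more detail than this, so your added remarks on the bookkeeping of the dualization are, if anything, slightly more careful than the original.
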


%==============================================================================%
%\section{Future Work}
%==============================================================================%

%Complementing comonads...

%==============================================================================%
\subsubsection*{Acknowledgements}
%==============================================================================%

I would like to thank Robin Cockett and Masahito Hasegawa for their valuable
suggestions.

%==============================================================================%

\bigskip
{\small\noindent
Department of Mathematics, Kyushu University, 744 Motooka, Nishi-ku,
Fukuoka 819-0395, Japan \\
\texttt{craig@math.kyushu-u.ac.jp}
}
\end{document}